\newcommand{\mc}{\mathscr}
\newcommand{\f}{\mathbb}
\newcommand{\cu}{\subseteq}
\newcommand{\serie}[1]{\{#1_{n}\}_n}
\newcommand{\GLT}{\sim_{GLT}}
\newcommand{\dacs}[2]{d_{acs}\left(#1,#2\right)}
\newcommand{\acs}{\xrightarrow{a.c.s.}}
\newcommand{\B}{\{B_{n,m}\}_{n,m}}
\newcommand{\ve}{\varepsilon}
\newtheorem{teo}{Theorem}[section]
\newtheorem{lemma}{Lemma}[section]
\newtheorem{corollario}{Corollary}[section]
\DeclareMathOperator{\ess}{ess}
\DeclareMathOperator{\rk}{rk}
\title{Equivalence between GLT sequences and measurable functions}
\author{Giovanni Barbarino}
\begin{document}

\maketitle

\begin{abstract}
The theory of Generalized Locally Toeplitz (GLT) sequences of matrices has been developed in order to study the asymptotic behaviour of particular spectral distributions when the dimension of the matrices tends to infinity. A key concepts in this theory are the notion of Approximating Classes of Sequences (a.c.s.), and spectral symbols, that lead to define a metric structure on the space of matrix sequences, and provide a link with the measurable functions.
In this document we prove additional results regarding theoretical aspects, such as the completeness of the matrix sequences space with respect to the metric a.c.s., and the identification of the space of GLT sequences with the space of measurable functions.
\end{abstract}

\section{Introduction}

When dealing with the discretization of differential equations, we often have to solve sequences of linear equations in the form $A_nx=b_n$, where $A_n\in \f C^{n\times n}$. The dimension of the matrices is determined by the degree of refinement of the mesh used in the Finite Difference methods, or the dimension of the subspace used in Finite Element methods. Solving high-dimensional linear equations is fundamental to get accurate solutions, but the rate of convergence of the solvers (Conjugate Gradient, Preconditioned Krylov methods, Multigrid techniques etc.) depends on the spectra of the matrices, so the knowledge of the asymptotic distribution of the sequence $\serie A$ is a strong tool we can use to choose or to design the best solver and method of discretization (see \cite{BS},\cite{GSM} and references therein).

These are some of the reasons that lead to the study of \textit{spectral symbols} of matrix sequences. We recall that a spectral symbol associated with a sequence $\serie A$ is a measurable functions $k:D\cu \f R^n\to \f C$, where $D$ is measurable set with finite non-zero Lebesgue measure, satisfying 
\[
\lim_{n\to\infty} \frac{1}{n} \sum_{i=1}^{n} F(\sigma_i(A_n)) = \frac{1}{|D|}\int_D F(|k(x)|) dx
\]
for every $F:\f R\to \f C$ continuous function with compact support.
Here $|D|$ is the Lebesgue measure of $D$, and 
\[\sigma_1(A_n)\ge \sigma_2(A_n)\ge\dots\ge \sigma_n(A_n)\]
are the singular values in non-increasing order. In this case, we will say that $\serie A$ has spectral symbol $k$ and we will write \[\serie A\sim_\sigma k.\]
The function $k$ thus becomes an asymptotic singular values distribution, but in general it is not uniquely determined.

The space of matrix sequences that admit a spectral symbol on a fixed domain $D$ has been shown to be closed with respect to a notion of convergence called the Approximating Classes of Sequences (a.c.s.). This notion and this result are due to Serra \cite{ACS}, but were
actually inspired by Tilli’s pioneering paper on LT sequences \cite{Tilli}. Given a sequence of matrix sequences $\B$, it is said to be a.c.s. convergent to $\serie A$ if there exist a sequence $\{N_{n,m}\}_{n,m}$ of "small norm" matrices and a sequence  $\{R_{n,m}\}_{n,m}$ of "small rank" matrices such that for every $m$ there exists $n_m$ with
\[
A_n = B_{n,m}  + N_{n,m} + R_{n,m}, \qquad \|N_{n,m}\|\le \omega(m), \qquad \rk(R_{n,m})\le nc(m)
\]
for every $n>n_m$, and
\[
\omega(m)\xrightarrow{m\to \infty} 0,\qquad c(m)\xrightarrow{m\to \infty} 0.
\]
In this case, we will use the notation $\B\acs \serie A$. The result of closeness tells us that if $\B \sim_\sigma k_m$ for every $m$, $k_m\to k$ in measure, and $\B\acs \serie A$, then $\serie A\sim_\sigma k$. This result is central in the theory since it lets us compute the spectral symbol of a.c.s. limits, and it is useful when we can find simple sequences that converge to the wanted $\serie A$.

In this context, it has been observed that the sequences $\serie A$ arising from differential equations can often be obtained through limit a.c.s. of sum and products of special diagonal and Toeplitz sequences, for which we can easily deduce the spectral symbol. This justifies the interest on the space of Generalized Locally Toeplitz (GLT) sequences, that contains both the before-mentioned class of sequences, gains the structure of $\f C$-algebra, and is closed with respect to the a.c.s. convergence. Here we will report only few properties of this space, but for a detailed presentation of the GLT sequences and their applications refer to \cite{BS},\cite{GLT},\cite{Tilli},\cite{GSM} and references therein.
The space of GLT is built so that for every GLT matrix sequence $\serie A$ we choose one of its spectral symbol $k$ in particular, up to identification of the function almost everywhere (a.e.), and we denote it as $\serie A\GLT k$. 
This choice ensure us that the GLT space 
\[
\mc G = \left\{ (\serie A,k)\in \mc E\times \mc M_D : \serie A\GLT k     \right\}
\]
is a $\f C-$algebra, where
\[
\mc M_D = \{k:D\to \f C, \medspace k \text{ measurable }\}/ \sim
\]
\[
k\sim k' \iff k=k' \text{ a.e.}
\]
and
\[
\mc E := \{\serie{A} : A_n\in\f C^{n\times n} \}.
\]
We will see more properties of $\mc G$ in Section 3.

We know that $\mc M_D$ is endowed with a complete metric that induces the convergence in measure of measurable functions, and we can also endow the set of matrix sequences $\mc E$ with a pseudometric that induces the a.c.s. convergence mentioned before. In section 2, we will define the pseudometric and introduce an easy way to compute it.
In the works \cite{GS} and \cite{GStr}, it is shown that $\mc G$ is a closed metric subspace of $\mc E\times \mc M_D$, and  the connection between the distances on $\mc E$ and $\mc M_D$ is emphasized.

The paper is organized as follows. In Section 2 we recall the definition of the pseudometric $d_{acs}$ on $\mc E$ and show it is actually complete. Given $d_m$ a particular distance in $\mc M_D$ that induces the convergence in measure, we also prove that if a sequence $\serie A$ has spectral symbol $k$, then $d_m(k,0) = \dacs{\serie A}{\serie{0}}$, where $\serie 0$ is the sequence of zero matrices.
Some fundamental properties of GLT sequences are reported in Section 3, where we use the previously obtained results to prove that,  up to a.c.s. equivalence, the set of GLT sequences is actually isomorphic and isometric to the space of measurable function $\mc M_D$. Eventually we prove that the GLT algebra is already a maximal group in the space of the sequences that admit a spectral symbol, and cannot be expanded anymore.


\section{Complete Pseudometric}

Given a matrix $A\in\f C^{n\times n}$, we can define the function
\[
p(A):= \min_{i=1,\dots,n}\left\{ \frac{i-1}{n} + \sigma_i(A) \right\}
\]
that respects the triangular inequality. In fact, given $A,B$ matrices with the same dimension, we have
\[
p(A+B)\le p(A) + p(B).
\] 
Given now a sequence $\serie A\in\mc E$, we can denote
\[
\rho\left(\serie A\right):= \limsup_{n\to \infty} p(A_n) 
\]
that lets us to introduce a pseudometric $d_{acs}$ on $\mc E$
\[
\dacs{\serie{A}}{\serie{B}} = \rho\left(\{A_n-B_n\}_n\right).
\]
It has been proved (\cite{Garoni}) that this distance induces the a.c.s. convergence already introduced. In other words,
\[
\dacs{\serie A}{\B} \xrightarrow{m\to \infty} 0 \iff  \B\acs \serie A.
\]

%

The next two sections shows that this pseudometric is complete, and that it is strictly linked to a complete metric of the measurable functions.

\subsection{Completeness}

In this section, we prove the completeness of the space $\mc E$ endowed with the pseudometric $d_{acs}$.

\begin{teo}\label{comp}
The set $\mc E$ is complete with the pseudometric  $d_{acs}$.
\end{teo}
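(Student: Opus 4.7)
The proof is the classical "subsequence" argument for completeness, adapted to the pseudometric $d_{acs}$. Since $\rho$ is defined through a $\limsup$, the rate of convergence is controlled only asymptotically in $n$, so the construction of the candidate limit must be "diagonal": for each $n$, pick the entry $A_n$ from one of the sequences $\serie{A^{(m_k)}}$ of a sufficiently refined subsequence.

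\textbf{Step 1: Extract a fast Cauchy subsequence.} Let $\{\serie{A^{(m)}}\}_m$ be Cauchy in $(\mc E, d_{acs})$. Extract indices $m_1 < m_2 < \cdots$ such that
\[
d_{acs}\!\left(\serie{A^{(m_k)}}, \serie{A^{(m_{k+1})}}\right) < 2^{-k}.
\]
By the definition $\rho(\serie C) = \limsup_n p(C_n)$, for each $k$ there exists $n_k$, which I choose strictly increasing in $k$, such that
\[
p\!\left(A_n^{(m_k)} - A_n^{(m_{k+1})}\right) < 2^{-k} \qquad \text{for every } n \geq n_k.
\]

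\textbf{Step 2: Define the candidate limit.} Set $A_n := A_n^{(m_k)}$ whenever $n_k \leq n < n_{k+1}$, and $A_n := 0$ for $n < n_1$. This produces a bona fide element $\serie A \in \mc E$.

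\textbf{Step 3: The subsequence converges to $\serie A$.} Fix $j$. For any $n \geq n_j$, let $k \geq j$ be the unique index with $n_k \leq n < n_{k+1}$, so that $A_n = A_n^{(m_k)}$. Using the telescoping
\[
A_n^{(m_j)} - A_n = \sum_{i=j}^{k-1}\left(A_n^{(m_i)} - A_n^{(m_{i+1})}\right)
\]
and the subadditivity $p(X+Y) \leq p(X) + p(Y)$ established in the excerpt, since $n \geq n_k \geq n_i$ for every $i = j,\dots,k-1$, we get
\[
p\!\left(A_n^{(m_j)} - A_n\right) \leq \sum_{i=j}^{k-1} 2^{-i} < 2^{1-j}.
\]
Taking the $\limsup$ in $n$,
\[
d_{acs}\!\left(\serie{A^{(m_j)}}, \serie A\right) = \rho\!\left(\{A_n^{(m_j)} - A_n\}_n\right) \leq 2^{1-j} \xrightarrow{j\to\infty} 0.
\]

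\textbf{Step 4: Upgrade to the full Cauchy sequence.} Since $\{\serie{A^{(m)}}\}_m$ is Cauchy and admits a subsequence converging to $\serie A$ in the pseudometric $d_{acs}$, a standard triangle-inequality argument shows that the whole sequence converges to $\serie A$. This concludes completeness.

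\textbf{Main obstacle.} The only non-routine point is reconciling the pointwise-in-$n$ definition of $A_n$ with the asymptotic (in $n$) nature of $\rho$; the increasing choice of the thresholds $n_k$ is exactly what makes the telescoping in Step 3 valid simultaneously for all summands, ensuring the bound $2^{1-j}$ holds for all sufficiently large $n$ regardless of which block $[n_k, n_{k+1})$ contains $n$.
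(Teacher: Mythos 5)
Your proof is correct and follows essentially the same route as the paper's: extract a rapidly Cauchy subsequence, find increasing thresholds $n_k$ past which the pointwise bound $p(A_n^{(m_k)}-A_n^{(m_{k+1})})$ is controlled, build the candidate limit block by block, and telescope. The only cosmetic differences are that you use a strict inequality $<2^{-k}$ in Step 1 (avoiding the paper's $\ve=2^{-m}$ slack, which produces $4\cdot 2^{-m}$ instead of your $2^{1-j}$), and that you make explicit the final step that subsequential convergence plus Cauchy gives full convergence, which the paper only notes in passing at the start of its proof.
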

\begin{proof}
Let $\B$ be a Cauchy sequence for the metric. The convergence of the sequence is equivalent to the convergence of any subsequence, with the same limit, so we can always extract a subsequence such that for every couple of indeces  $s,t$ 
\[
\dacs{\{B_{n,s}\}_n}{\{B_{n,t}\}_n} \le 2^{-\min\{s,t\}}
\] 
or, equivalently
\[
\limsup_{n\to \infty} p(B_{n,s} - B_{n,t})\le 2^{-\min\{s,t\}}.
\]
If we consider the indices   $m$ and $m+1$, we obtain
\[
\limsup_{n\to \infty} p(B_{n,m} - B_{n,m+1})\le 2^{-m}.
\]
We know that, given an $\ve >0$, the argument of the limsup is eventually less then $2^{-m}+\ve$. Thus we choose $\ve = 2^{-m}$ and find a strictly increasing sequence of indices $N_m$ such that 
\[
 p(B_{n,m} - B_{n,m+1})\le 2^{-m+1} \qquad \forall n\ge  N_m.
\]
We can now build the sequence $\serie{A}$ that will be our limit guess.
\[
A_n := B_{n,m} \qquad \text{ whenever } \qquad N_{m+1}>n\ge N_m
\]
If $N_{M+1}>n\ge N_M$ with $M\ge m$, then we can estimate the distance between $A_n$ and $B_{n,m}$ as
\[
p(B_{n,m}-A_n) = p(B_{n,m}-B_{n,M}) \le \sum_{k=m}^{M-1} p(B_{n,k}-B_{n,k+1}),
\]
but $n\ge N_M> N_k$ for all indices $k$ in the summation, so
\[
\sum_{k=m}^{M-1} p(B_{n,k}-B_{n,k+1})\le \sum_{k=m}^{M-1} 2^{-k+1}\le 2\cdot 2^{-m} \sum_{k=0}^{M-m-1} 2^{-k}	\le 4\cdot 2^{-m}.
\]
The latter bound does not depend on $M$ anymore, so we conclude that
\[
p(B_{n,m}-A_n)\le 4\cdot 2^{-m} \qquad \forall m\quad \forall n\ge N_m
\]
\[
\implies \dacs{\{B_{n,m}\}_{n}}{\serie A} =  \limsup_{n\to \infty} p(B_{n,m}-A_n)\le 4\cdot 2^{-m} \xrightarrow{m\to \infty} 0.
\]
\end{proof}

Theorem \ref{comp} tells us that if we have a sequence $\B$ we can claim its convergence just by checking if it is a Cauchy sequence. Moreover, we can actually build the limit sequence by following the proof of the theorem.

\subsection{Equivalence of Distances}

On the space $\mc M_D$ we can define the function
\[
p_m(f) := \inf \left\{ \frac{|E^C|}{|D|} + \ess\sup_E |f|  \right\},
\]
where the inf is taken on all the measurable sets $E\cu D$ and $E^C$ stands for the complementary of $E$. This function induces the complete distance 
\[
d_m(f,g) = p_m(f-g)
\]
and in this section we prove that if $\serie A\sim_\sigma g$ then 
\[
d_m(g,0) = p_m(g) = \rho(\serie A) = \dacs{\serie A}{\serie{0}}
\]
where $\serie 0$ is the sequence of zero matrices and growing size $n$.
\begin{lemma}
Given $\serie A\in\mc E$ and $g\in \mc M_D$, we have
\[
\serie{A}\sim_\sigma g,\quad  p_m(g) = L\implies \rho(\serie{A})\le L.
\]
\end{lemma}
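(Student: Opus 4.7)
My plan is, for each $\ve>0$, to choose an index $j_n\in\{1,\dots,n\}$ with $(j_n-1)/n \lesssim a+\ve$ and $\sigma_{j_n}(A_n)\le b+\ve$ eventually, where $E\subseteq D$, $a:=|E^C|/|D|$, $b:=\ess\sup_E|g|$ are selected so that $a+b\le L+\ve$ (possible since $p_m(g)=L$). This will give $p(A_n)\le(j_n-1)/n+\sigma_{j_n}(A_n)\le L+O(\ve)$, and letting $\ve\to 0$ yields $\rho(\serie A)\le L$. Since $|g|\le b$ a.e.\ on $E$, we have $|\{|g|>b\}|/|D|\le a$, and the whole argument reduces to the count estimate
\[
\limsup_n \frac{|\{i\,:\,\sigma_i(A_n)>b+\ve\}|}{n}\le a+\ve, \qquad (\star)
\]
after which one simply sets $j_n:=\lceil(a+2\ve)n\rceil+1$.

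To prove $(\star)$, I split the count into the bounded slice $|\{\sigma_i(A_n)\in(b+\ve, M]\}|$ and the tail $|\{\sigma_i(A_n)>M\}|$, for a large cutoff $M$. The bounded slice is controlled directly through the spectral symbol: I pick a continuous compactly supported trapezoid $F$ equal to $1$ on $[b+\ve, M]$ and vanishing outside $[b, M+1]$, so that $\chi_{(b+\ve,M]}\le F\le \chi_{(b,M+1)}$. The hypothesis $\serie A\sim_\sigma g$ then gives
\[
\limsup_n \frac{|\{i\,:\,\sigma_i(A_n)\in(b+\ve, M]\}|}{n}\le \lim_n \frac{1}{n}\sum_i F(\sigma_i(A_n))=\frac{1}{|D|}\int_D F(|g|)\le \frac{|\{|g|>b\}|}{|D|}\le a.
\]

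The hard part will be bounding the tail $|\{\sigma_i(A_n)>M\}|/n$, because the compactly supported test functions permitted by the definition of $\sim_\sigma$ cannot directly see singular values beyond any fixed cutoff. My plan is a tightness argument: the empirical measures $\nu_n:=n^{-1}\sum_i \delta_{\sigma_i(A_n)}$ and the distribution $\mu$ of $|g|$ under $|D|^{-1}\mathrm{Leb}|_D$ are both probabilities, and $\serie A\sim_\sigma g$ is precisely the statement that $\nu_n\to\mu$ vaguely on $C_c(\f R)$. Since $|g|$ is finite a.e., $\mu([0,M-1])\uparrow 1$ as $M\to\infty$; testing against a continuous bump $F_M\le 1$ that equals $1$ on $[0,M-1]$ and vanishes outside $[0,M]$ gives $\liminf_n \nu_n([0,M])\ge \int F_M\,d\mu\ge \mu([0,M-1])$, so that for $M$ sufficiently large $\limsup_n \nu_n((M,\infty))\le \ve$. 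Combining the slice and tail bounds establishes $(\star)$ and closes the proof.
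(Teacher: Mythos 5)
Your proof is correct and shares the outer structure of the paper's argument: given $\ve>0$, pick $E\subseteq D$ realizing $a+b\le L+\ve$ with $a=|E^C|/|D|$ and $b=\ess\sup_E|g|$, establish that the asymptotic fraction of singular values of $A_n$ exceeding $b+\ve$ is at most roughly $a$, and then choose the index just past those large singular values to bound $p(A_n)\le L+O(\ve)$. Where the two routes diverge is in how the count estimate $(\star)$ is obtained. You bound $\limsup_n\frac{1}{n}|\{i:\sigma_i(A_n)>b+\ve\}|$ from above directly; since compactly supported test functions cannot see past any fixed cutoff, you must split off the unbounded tail $(M,\infty)$ and control it with a separate tightness argument. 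The paper avoids the tail entirely by instead bounding the \emph{complementary} count from \emph{below}: a single test function $\phi$ with $\chi_{[0,b]}\le\phi\le\chi_{[-\ve,\,b+\ve]}$ gives
\[
\liminf_{n\to\infty}\frac{1}{n}\left|\left\{i:\sigma_i(A_n)\le b+\ve\right\}\right|\ \ge\ \frac{1}{|D|}\int_D\phi(|g|)\,dx\ \ge\ \frac{|\{|g|\le b\}|}{|D|}\ \ge\ 1-a,
\]
and subtracting from $1$ yields the bound on the large-singular-value fraction in one step, with no tightness lemma needed. The complement trick turns the tail into a non-issue: every singular value, however large, falls into exactly one of the two counts, and the definition of $\sim_\sigma$ already lets you lower-bound the mass near the origin. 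Your tightness argument is a correct and standard tool, but here the complement route reaches the same estimate with strictly less machinery.
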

\begin{proof}

We know that
\[
 p_m(g)= \inf_{E\cu D} \left\{ \frac{|E^C|}{|D|} + \ess\sup_E |g|  \right\}= L,
\]
where the sets $E$  are Lebesgue measurable.
By definition of inferior, if we set $\ve>0$, we can always find  $F$ such that 
\[
 \frac{|F^C|}{|D|}+ \ess\sup_F |g| \le L + \ve.
\]
From now on, let us call $M=\ess\sup_F |g|$.
Since $\serie A \sim_\sigma g$, we know that
\[
\lim_{n\to\infty} \frac{1}{n} \sum_{i=1}^{n} F(\sigma_i(A_n)) = \frac{1}{|D|}\int_D F(|g(x)|) dx
\]
for every function $F:\f R\to \f C$ continuous with compact support. Given such an $F$ real valued and with  $\chi_{[-\ve,M+\ve]}\ge F\ge \chi_{[0,M]}$, we obtain
\[
\lim_{n\to\infty} \frac{1}{n} \sum_{i=1}^{n} F(\sigma_i(A_n)) \le \liminf_{n\to\infty} \frac{1}{n} \left|\left\{ i:\sigma_i(A_n) \le M+\ve  \right\}\right|
\]
and
\[
\int_D F(|g(x)|) dx \ge  |\{x:|g(x)|\le M\}| \ge |F|.
\]
Therefore
\[
\liminf_{n\to\infty} \frac{1}{n} \left|\left\{ i:\sigma_i(A_n) \le M+\ve  \right\}\right|\ge \frac{|F|}{|D|},
\]
\[
\limsup_{n\to\infty} \frac{1}{n} \left|\left\{ i:\sigma_i(A_n) > M+\ve  \right\}\right|\le \frac{|F^C|}{|D|}\le L + \ve - M.
\]
The argument of limsup will eventually be less then $L + 2\ve - M$, so there exists $N>0$ such that 
\[
\frac{1}{n} \left|\left\{ i:\sigma_i(A_n) > M+\ve  \right\}\right|\le L + 2\ve - M
\quad \forall n>N,
\]
\[
\frac{1}{n} \left|\left\{ i:\sigma_i(A_n) > M+\ve  \right\}\right| + M+\ve\le L + 3\ve 
\quad \forall n>N.
\]
This concludes the proof since
\[
\rho(\serie{A}) = \limsup_{n\to\infty}\min_{i=1,\dots,n}\left\{\frac{i-1}{n} + \sigma_i(A_n)\right\}\]
and for every $n>N$ we can choose the greatest $\sigma_i(A_n)$ less or equal than $M+\ve$. Consequently
\[
\min_{i=1,\dots,n}\left\{\frac{i-1}{n} + \sigma_i(A_n)\right\}\le \frac{1}{n} \left|\left\{ i:\sigma_i(A_n) > M+\ve  \right\}\right| + M+\ve \le L+ 3\ve
\]
and hence
\[
\rho(\serie{A})\le \limsup_{n\to\infty} \{ L + 3\ve \} = L+3\ve
\]
for every $\ve>0$, so that the proof in concluded since $\rho(\serie A)\le C$.
\end{proof}

The converse statement has a similar proof

\begin{lemma}
Given $\serie A\in\mc E$ and $g\in \mc M_D$, we have
\[
\serie{A}\sim_\sigma g,\quad  \rho(\serie{A}) = L\implies p_m(g) \le L.
\]
\end{lemma}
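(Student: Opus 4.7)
The plan is to reverse the argument of the previous lemma. From the assumption $\rho(\serie A) = L$ I will extract, for each $\ve > 0$, an asymptotic pair $(\alpha^*, t^*)$ with $\alpha^* + t^* \le L + \ve$ encoding the proportion of large singular values of $A_n$ and their threshold. Via the symbol relation $\serie A \sim_\sigma g$ this pair will then be converted into an explicit measurable $E \cu D$ witnessing $p_m(g) \le L + O(\ve)$.

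Concretely, fix $\ve > 0$. Since $\rho(\serie A) = \limsup_n p(A_n) = L$, there are infinitely many $n$ with $p(A_n) \le L + \ve$, and for each such $n$ the minimum defining $p(A_n)$ is attained at some index $i_n$. Setting $\alpha_n := (i_n - 1)/n \in [0,1]$ and $t_n := \sigma_{i_n}(A_n) \in [0, L+\ve]$, I have $\alpha_n + t_n \le L + \ve$. By compactness I pass to a subsequence along which $\alpha_n \to \alpha^*$ and $t_n \to t^*$, preserving $\alpha^* + t^* \le L + \ve$. Non-increasingness of the singular values gives $|\{i : \sigma_i(A_n) > t_n\}| \le i_n - 1$, and so for any $\eta > 0$, once $t_n < t^* + \eta$ along the subsequence,
\[
\frac{|\{i : \sigma_i(A_n) > t^* + \eta\}|}{n} \le \frac{i_n - 1}{n} = \alpha_n \xrightarrow{n \to \infty} \alpha^*.
\]

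Next I apply the symbol relation to a continuous compactly supported $F$ sandwiched between indicators of $[t^* + 2\eta, M]$ and $[t^* + \eta, M + 1]$ (the mirror of the sandwich used in the previous lemma); passing to the limit first in $n$ and then in $M \to \infty$ converts the display above into
\[
\frac{|\{x \in D : |g(x)| > t^* + 2\eta\}|}{|D|} \le \alpha^*.
\]
Taking $E := \{x \in D : |g(x)| \le t^* + 2\eta\}$ then gives $\ess\sup_E |g| \le t^* + 2\eta$ and $|E^C|/|D| \le \alpha^*$, so
\[
p_m(g) \le \alpha^* + t^* + 2\eta \le L + \ve + 2\eta.
\]
Sending $\eta \downarrow 0$ and $\ve \downarrow 0$ yields $p_m(g) \le L$.

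The main technical obstacle is the same as in the previous lemma: approximating the threshold indicator by continuous compactly supported test functions. If $|g|$ is not essentially bounded the cutoff $M$ must be sent to infinity at the end, which is handled by continuity of measure; the freedom in $\eta$ also lets us dodge atoms of the distribution function $c \mapsto |\{|g| > c\}|/|D|$. The extraction of $(\alpha^*, t^*)$ by compactness and the final reduction to a level-set choice of $E$ are routine once the sandwich is in place.
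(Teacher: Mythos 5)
Your proof is correct and mirrors the paper's argument: both extract the index realizing the minimum in $p(A_n)$, pass to a convergent subsequence of the resulting scalar data, feed the subsequence bound through $\serie A\sim_\sigma g$ via a sandwiched compactly supported test function, and finally read off $p_m(g)\le L$ from the level set $E=\{|g|\le t^*+2\eta\}$. The only differences are cosmetic — you extract limits $(\alpha^*,t^*)$ for both scalar quantities simultaneously (the paper takes a subsequence only for $\sigma_{j(n)}$ and recovers the proportion bound from the inequality $\frac{j(n_k)-1}{n_k}+\sigma_{j(n_k)}(A_{n_k})\le L+\ve$), and your test function detects \emph{large} singular values on an unbounded interval so you must introduce the cutoff $M$ and send $M\to\infty$, whereas the paper's $F$ is sandwiched between $\chi_{[0,x+\ve]}$ and $\chi_{[-\ve,x+2\ve]}$ and so is already compactly supported, avoiding that extra limit.
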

\begin{proof}
\[
\rho(\serie{A}) = \limsup_{n\to\infty}\min_{i=1,\dots,n}\left\{\frac{i-1}{n} + \sigma_i(A_n)\right\}= L
\]
so we can set $\ve >0$ and find $N>0$ such that
\[
\min_{i=1,\dots,n}\left\{\frac{i-1}{n} + \sigma_i(A_n)\right\}\le L + \ve\qquad \forall n>N
\]
and if we call $j(n)$ the index $i$ that realizes the minimum, then 
\[
0	\le \frac{j(n)-1}{n} + \sigma_{j(n)}(A_n)\le L+\ve \qquad \forall n>N.
\]
The sequence $\{\sigma_{j(n)}(A_n)\}_n$ is bounded, so we can find a converging subsequence such that
\[
 \{\sigma_{j(n_k)}(A_{n_k})\}_{n_k}\xrightarrow{k\to \infty} x\ge 0,
\]
meaning that there exists $K>0$ for which
\[
x-\ve < \{\sigma_{j(n_k)}(A_{n_k})\}_{n_k}<x+\ve \qquad \forall k>K.
\]
Since $\serie A \sim_\sigma g$ we know that
\[
\lim_{n\to\infty} \frac{1}{n} \sum_{i=1}^{n} F(\sigma_i(A_n)) = \frac{1}{|D|}\int_D F(|g(x)|) dx
\]
for every function $F:\f R\to \f C$ continuous with compact support. Given such an $F$ real valued and with  $\chi_{[-\ve,x+2\ve]}\ge F\ge \chi_{[0,x+\ve]}$, we obtain
\[
\lim_{n\to\infty} \frac{1}{n} \sum_{i=1}^{n} F(\sigma_i(A_n)) \ge \limsup_{n\to\infty} \frac{1}{n} \left|\left\{ i:\sigma_i(A_n) \le x +\ve  \right\}\right|
\]and
\[
\int_D F(|g(x)|) dx \le |\{x:|g(x)|\le x+2\ve\}|.
\]
As consequence
\[
 \frac{|\{x:|g(x)|\le x+2\ve\}| }{|D|}\ge\limsup_{n\to\infty} \frac{1}{n} \left|\left\{ i:\sigma_i(A_n) \le x+\ve  \right\}\right|,
\]
\[
 \frac{|\{x:|g(x)|> x+2\ve \}| }{|D|}\le\liminf_{n\to\infty} \frac{1}{n} \left|\left\{ i:\sigma_i(A_n) > x +\ve \right\}\right|
\]
and we can find $M>0$ such that
\[
 \frac{|\{x:|g(x)|> x+2\ve \}| }{|D|} - \ve \le \frac{1}{n} \left|\left\{ i:\sigma_i(A_n) > x+\ve   \right\}\right| \qquad \forall n>M,
\]
\[
 \frac{|\{x:|g(x)|> x+2\ve \}| }{|D|} +x+2\ve   \le \frac{1}{n} \left|\left\{ i:\sigma_i(A_n) > x +\ve \right\}\right| + x + 3\ve \qquad \forall n>M.
\]
Given now an index $n_k$ such that $k>K$ and $n_k>\max\{N,M\}$, we know that
\[
\frac{1}{n_k} \left|\left\{ i:\sigma_i(A_{n_k}) > x +\ve \right\}\right| + x-\ve \]\[\le \frac{1}{n_k} \left|\left\{ i:\sigma_i(A_{n_k}) > \sigma_{j(n_k)}(A_{n_k}) \right\}\right| + \sigma_{j(n_k)}(A_{n_k})
\]
\[
= \frac{j(n_k)-1}{n_k} + \sigma_{j(n_k)}(A_{n_k}) 	\le L+\ve.
\]
Therefore we can rewrite
\[
 \frac{|\{x:|g(x)|> x+2\ve \}| }{|D|} +x+2\ve   \le L+\ve+ 4\ve 
\]
and this does not depend on $n$ anymore. This leads to the thesis since, if we choose
\[
F = \{x:|g(x)|\le x+2\ve \},
\]
then
\[
 p_m(g)= \inf_{E\cu D} \left\{ \frac{|E^C|}{|D|} + \ess\sup_E |g|  \right\}\le 
  \frac{|F^C|}{|D|} + \ess\sup_F |g|\le 
\]
\[
 \frac{|\{x:|g(x)|> x+2\ve \}| }{|D|} +x+2\ve \le L + 5\ve
\]
for all $\ve>0$, and eventually $p_m(g)\le L$.
\end{proof}

These lemmas lead to the desired result.

\begin{teo}\label{dis}
Given $\serie A\in\mc E$ and $g\in \mc M_D$, then
\[
\serie{A}\sim_\sigma g \implies   \rho(\serie{A}) = p_m(g) .
\]
\end{teo}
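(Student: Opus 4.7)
The plan is to deduce Theorem \ref{dis} as an immediate consequence of the two lemmas just established, since together they provide the two complementary inequalities between $\rho(\serie A)$ and $p_m(g)$ under the shared hypothesis $\serie A \sim_\sigma g$. I would present the proof simply by chaining them.

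Concretely, I would first invoke the first lemma with the choice $L := p_m(g)$, which immediately gives $\rho(\serie A) \le p_m(g)$. Then I would apply the second lemma with the choice $L := \rho(\serie A)$, which gives the reverse inequality $p_m(g) \le \rho(\serie A)$. The two bounds together yield the equality $\rho(\serie A) = p_m(g)$ claimed in the statement.

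There is essentially no obstacle here, because the analytic work was already carried out in the two lemmas: both proofs reduced the inequality to be proved to a comparison between a level set of $|g|$ and the number of singular values exceeding a threshold, obtained by sandwiching indicator functions between continuous compactly supported test functions of the form $\chi_{[0, M]} \le F \le \chi_{[-\varepsilon, M+\varepsilon]}$ and feeding them into the definition of $\serie A \sim_\sigma g$. One minor point to note is that both $p_m(g)$ and $\rho(\serie A)$ take values in $[0, +\infty]$ but are in fact bounded (by the trivial choices $E = \emptyset$ in the definition of $p_m$ and $i = n$ in the definition of $p$), so plugging either finite value in as $L$ in the respective lemma is legitimate. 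Since no new ideas or estimates are needed, the proof is just a two-line application of the lemmas.
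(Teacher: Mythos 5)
Your proof is correct and takes exactly the same route as the paper, which derives Theorem~\ref{dis} directly from the two preceding lemmas by applying the first with $L=p_m(g)$ and the second with $L=\rho(\serie A)$. One small inaccuracy in your parenthetical remark: taking $i=n$ gives $p(A_n)\le\frac{n-1}{n}+\sigma_n(A_n)$, which is \emph{not} a uniform bound (the smallest singular value need not be small), so boundedness of $\rho(\serie A)$ is not ``trivial'' from the definition; it is instead delivered by the first lemma itself once one notes $p_m(g)\le 1$, which is enough to make the chaining legitimate.
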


With the above results, we can finally focus only on the GLT algebra.

\section{The GLT Algebra}

Let us denote as $\mc C_D$ the set of the couples $(\serie A,k)\in \mc E\times \mc M_D$ such that $\serie A\sim_\sigma k$ and where $D=[0,1]\times[-\pi,\pi]$. First of all we can see that it is well defined, because from the definition, if $k,k'$ are two measurable functions that coincide almost everywhere, then 
\[
\serie A\sim_\sigma k\iff \serie A\sim_\sigma k'
\] 
so when we say that $k\in \mc M_D$ and $\serie A \sim_\sigma k$, it means that every function in the equivalence class $k$ is a spectral function for $\serie A$.

As already anticipated in the first section, the set $\mc G$ of GLT sequences is a subset of $\mc C_D$, so when we say that a sequence $\serie A$ is a GLT with spectral symbol $k$ and we write $\serie A\GLT k$, it means that $(\serie A,k)\in \mc G$ and in particular, it means that $\serie A\sim_\sigma k$. 

The GLT set is built so that for every $\serie A\in \mc E$ there exists at most one (class of) function $k$ such that $\serie A\GLT k$, so if we call $\mc H$ the GLT matrix sequences, that is the projection of $\mc G$ on $\mc E$, then there exists a map
\[
S : \mc H\to \mc M_D 
\]
that associates to each sequence its GLT spectral symbol
\[
S(\serie A) = k \iff \serie A\GLT k.
\]
The main properties of $\mc G$ that we need for the following sections and can be found or immediately derived from the results in \cite{GS} are
\begin{enumerate}
\item $\mc G$ is a $\f C$-algebra, meaning that given $(\serie A,k)$,$(\serie B,h)\in \mc G$ and \\$\lambda \in\f C$, then
\begin{itemize}
\item $(\{ A_n+B_n \}_n,k+h)\in \mc G$,
\item $(\{ A_nB_n \}_n,kh)\in \mc G$,
\item $(\{ \lambda A_n \}_n,\lambda k)\in \mc G$.
\end{itemize}
\item\label{app}
 Given $k\in \mc M_D$, then there exists $k_m\in\mc M_D$ that converge to $k$ in measure and such that they are GLT symbols.
\item \label{close}
$\mc G$ is closed in $\mc E\times \mc M_D$: given $\{(\B,k_m)\}_m\cu \mc G$ such that 
\[\B\acs\serie A, \qquad k_m\to k \text{ in measure}\]
 where $(\serie A,k)\in \mc E\times \mc M_D$, then $(\serie A,k)\in \mc G$.
\item \label{zero}We call \textit{zero-distributed} the matrix sequences that have 0 as spectral symbol, and we denote the sets 
\[
\mc Z = \{ (\serie C,0)\in \mc C_D \},\qquad \mc Z_M = \{ \serie C : (\serie C,0)\in \mc C_D \}.
\]
Then $\mc Z$ is a subalgebra of $\mc G$.
\item\label{ker} $S$ is an homomorphism of $\f C$-algebras and $\mc Z_M$ coincides with its kernel.
\end{enumerate}

With these powerful properties, we can prove that the GLT algebra is complete with respect to the metric on $\mc E\times \mc M_D$, and that $\mc H$, up to equivalence a.c.s., is actually isomorphic and isometric to $\mc M_D$.

\subsection{Cauchy Sequences}
An immediate result we can obtain from the precedent section is

\begin{corollario}\label{Cau}
Given $\{B_{n,m}\}_{n,m}\GLT f_m$, then
\[
\{B_{n,m}\}_{n,m} \text{ converges} \iff \{f_m\}_m \text { converges.}
\] 
\end{corollario}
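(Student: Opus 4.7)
The plan is to exploit the $\f C$-algebra structure of $\mc G$ together with Theorem \ref{dis} to exhibit an isometry between the two indexed families, and then to invoke completeness on both sides.

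First I would observe that for any two indices $m,m'$, the algebra property of $\mc G$ gives
\[
\{B_{n,m}-B_{n,m'}\}_n \GLT f_m - f_{m'},
\]
and in particular $\{B_{n,m}-B_{n,m'}\}_n \sim_\sigma f_m - f_{m'}$, since every GLT sequence admits its GLT symbol as an ordinary spectral symbol (this is exactly the inclusion $\mc G \cu \mc C_D$ recalled in Section 3). Applying Theorem \ref{dis} to this difference then yields
\[
\dacs{\{B_{n,m}\}_n}{\{B_{n,m'}\}_n} = \rho(\{B_{n,m}-B_{n,m'}\}_n) = p_m(f_m-f_{m'}) = d_m(f_m,f_{m'}).
\]

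This identity shows that the correspondence $\{B_{n,m}\}_n \leftrightarrow f_m$ is an isometry between the indexed families inside $(\mc E, d_{acs})$ and $(\mc M_D, d_m)$. In particular, $\B$ is a Cauchy sequence in the $d_{acs}$ pseudometric if and only if $\{f_m\}_m$ is a Cauchy sequence in the $d_m$ metric. Completeness of $(\mc E, d_{acs})$ is supplied by Theorem \ref{comp}, and $(\mc M_D, d_m)$ is a classical complete metric space; on both sides, being Cauchy is therefore equivalent to converging, which gives the desired biconditional.

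The corollary is essentially immediate once Theorem \ref{dis} is in hand, so I do not expect a substantial obstacle. The only point worth checking carefully is the legitimacy of applying Theorem \ref{dis} to the difference sequence, which rests on closure of $\mc G$ under differences (the $\f C$-algebra property) and on the fact that a GLT symbol really is a spectral symbol in the $\sim_\sigma$ sense; both facts are recorded at the beginning of Section 3.
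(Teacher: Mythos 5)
Your proposal is correct and follows essentially the same route as the paper: use the algebra structure of $\mc G$ to form the GLT difference sequence, apply Theorem \ref{dis} to obtain $\dacs{\{B_{n,i}\}_n}{\{B_{n,j}\}_n} = d_m(f_i,f_j)$, and then conclude by equating Cauchy with convergent via completeness of both spaces. The only difference is that you spell out the step that a GLT symbol is also a $\sim_\sigma$ spectral symbol, which the paper leaves implicit.
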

\begin{proof}
Since both $\mc E$ and $\mc M_D$ are complete spaces with the respective metric, it is sufficient to prove that
\[
\{B_{n,m}\}_{n,m} \text{ Cauchy} \iff \{f_m\}_m \text { Cauchy}.
\]
We know that the space of GLT couples is a $\f C$-algebra, so
\[
\{B_{n,i}-B_{n,j}\}_{n}\GLT f_i - f_j.
\]
The previous section results imply that 
\[
\dacs{\{B_{n,i}\}_n}{\{B_{n,j}\}_n} = d_m(f_i,f_j)
\]
so if one of the two is a Cauchy sequence with respect to its metric, even the other one is a Cauchy sequence.
\end{proof}

 This is all we need to prove the following result.

\begin{teo}\label{mis}
Every measurable function $k\in \mc M_D$ with $D=[0,1]\times [-\pi,\pi]$ is a GLT symbol.
\end{teo}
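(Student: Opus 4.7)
The plan is to combine the three heavy ingredients already collected: property (\ref{app}) (approximation of any measurable function by GLT symbols), Corollary \ref{Cau} (GLT Cauchy iff symbol-Cauchy), and property (\ref{close}) (closedness of $\mc G$ in $\mc E\times\mc M_D$). The strategy is to realize $k$ as an a.c.s.\ limit of a doubly-indexed GLT sequence and then close the diagram.

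First I would invoke property (\ref{app}) to fix a sequence of GLT symbols $k_m\in\mc M_D$ with $k_m\to k$ in measure; by definition of $\mc H$, for each $m$ there is a matrix sequence $\{B_{n,m}\}_n \in \mc H$ with $\{B_{n,m}\}_n \GLT k_m$. Since $(\mc M_D,d_m)$ is a complete metric space in which $d_m$ induces convergence in measure, the sequence $\{k_m\}_m$ is Cauchy in $d_m$. Corollary \ref{Cau} then transfers this to the matrix side: $\{B_{n,m}\}_{n,m}$ is a Cauchy sequence for $d_{acs}$.

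Next I would apply Theorem \ref{comp}: the space $(\mc E,d_{acs})$ is complete, so there exists $\serie A\in\mc E$ such that $\{B_{n,m}\}_{n,m}\acs \serie A$. At this point we have exactly the hypotheses of property (\ref{close}): the family $\{(\{B_{n,m}\}_n,k_m)\}_m$ lies in $\mc G$, the matrix component converges a.c.s.\ to $\serie A$, and the symbol component converges in measure to $k$. Closedness then yields $(\serie A,k)\in \mc G$, i.e.\ $\serie A\GLT k$, which proves that $k$ is a GLT symbol.

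There is really no hard step here, as the theorem is designed to be the clean pay-off of the machinery built in Sections 2 and 3: the only thing one must check carefully is that the identification $d_{acs}(\{B_{n,i}\}_n,\{B_{n,j}\}_n)=d_m(k_i,k_j)$ used in Corollary \ref{Cau} is legitimate for the doubly-indexed family (which follows because $\mc G$ is a $\f C$-algebra and hence $\{B_{n,i}-B_{n,j}\}_n\GLT k_i-k_j$, to which Theorem \ref{dis} applies). Once this is noted, the four bullets above chain together without further computation.
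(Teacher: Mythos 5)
Your proposal is correct and follows exactly the paper's route: approximate $k$ by GLT symbols $k_m$ via Property (\ref{app}), use Corollary \ref{Cau} to get a.c.s.\ convergence of the corresponding GLT matrix sequences, and close with Property (\ref{close}). The only cosmetic difference is that you unpack the ``converges iff converges'' statement of Corollary \ref{Cau} into its Cauchy form and cite Theorem \ref{comp} explicitly, steps the paper leaves implicit inside the corollary.
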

\begin{proof}

Given $k$, by Property (\ref{app}) we know that there exists a sequence $k_m$ of GLT symbols that converge to $k$ in measure. If $\B\GLT k_m$, then by Corollary \ref{Cau}, $\B$ converges, so there exists $\serie A$ such that $\{B_{n,m}\}_{n,m}\acs \serie A$. Thanks to Property (\ref{close}), we obtain that $\serie A\GLT k$, so $k$ is a GLT symbol.
\end{proof}

This also lets us formulate other results that are easily provable.

\begin{corollario}
If the following conditions
\begin{enumerate}
\item $\B\GLT k_m$,
\item $\serie A\GLT k$,
\item $k_m\to k$ in measure,
\item $\B\acs \serie A$,
\end{enumerate}
are satisfied, then
\begin{itemize}
\item $(1),(3) \implies \exists \serie A : (4), (2)$,
\item $(1),(4) \implies \exists k: (2), (3)$,
\item $(2),(3) \implies \exists \B : (1),(4)$.
\end{itemize}
\end{corollario}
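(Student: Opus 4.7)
The plan is to derive all three implications from the same toolkit: Corollary~\ref{Cau} (the Cauchy equivalence), the completeness of $\mc E$ with $d_{acs}$ (Theorem~\ref{comp}), the completeness of $\mc M_D$ with $d_m$, the closedness of $\mc G$ (Property~\ref{close}), and Theorem~\ref{mis} which produces GLT sequences prescribed by their symbol. The key observation underlying everything is that convergence in measure and a.c.s. convergence are both equivalent to being Cauchy in the respective complete (pseudo)metric spaces, and that Corollary~\ref{Cau} ties these two Cauchy conditions together bijectively for sequences of GLT pairs.

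For $(1),(3)\Rightarrow(2),(4)$: from $(3)$ the sequence $\{k_m\}_m$ is Cauchy in $\mc M_D$, hence by Corollary~\ref{Cau} applied to $(1)$ the double sequence $\B$ is Cauchy in $(\mc E, d_{acs})$. By Theorem~\ref{comp} it converges to some $\serie A$, which gives $(4)$. Then Property~\ref{close} applied to $\{(\B,k_m)\}_m\subseteq \mc G$ with $\B\acs\serie A$ and $k_m\to k$ in measure yields $(\serie A,k)\in\mc G$, i.e. $(2)$.

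For $(1),(4)\Rightarrow(2),(3)$: symmetrically, $(4)$ means $\B$ is Cauchy in $\mc E$, so by Corollary~\ref{Cau} the symbols $\{k_m\}_m$ are Cauchy in $\mc M_D$. Completeness of $\mc M_D$ produces $k$ with $k_m\to k$ in measure, which is $(3)$. Property~\ref{close} then gives $(2)$, exactly as before.

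For $(2),(3)\Rightarrow(1),(4)$: here Theorem~\ref{mis} is essential, since it guarantees that for each $m$ there exists a sequence $\{B_{n,m}\}_n$ with $\{B_{n,m}\}_n\GLT k_m$, giving $(1)$. By $(3)$ the symbols $k_m$ are Cauchy, so by the first implication of this very corollary there is some $\serie A'$ with $\{B_{n,m}\}_{n,m}\acs\serie A'$ and $\serie A'\GLT k$. The mild subtlety, and the only place where care is needed, is that $d_{acs}$ is merely a pseudometric, so $\serie A'$ need not equal $\serie A$. However, both $\serie A\GLT k$ and $\serie A'\GLT k$, so by Property~\ref{ker} the sequence $\{A_n-A'_n\}_n$ lies in $\mc Z_M$, i.e.\ $\serie A-\serie A'\sim_\sigma 0$. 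Theorem~\ref{dis} then gives
\[
\dacs{\serie A}{\serie A'}=\rho(\serie A-\serie A')=p_m(0)=0,
\]
so the triangle inequality for $d_{acs}$ transfers $\{B_{n,m}\}_{n,m}\acs \serie A'$ to $\{B_{n,m}\}_{n,m}\acs\serie A$, establishing $(4)$. This last step is the one requiring the most care, since it is where the pseudometric nature of $d_{acs}$ interacts with the fact that GLT symbols identify sequences only modulo zero-distributed ones; everything else is a direct chaining of the already established completeness and closedness results.
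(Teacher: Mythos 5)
Your proof is correct and follows the route the paper clearly intends: the paper states this corollary without proof, immediately after developing Corollary~\ref{Cau}, Theorem~\ref{mis}, the completeness results, and the closedness of $\mc G$, so chaining those is precisely the expected argument. The one place you rightly flagged as requiring care --- reconciling the $\serie A$ of hypothesis (2) with the $\serie A'$ produced by completeness, using Property~\ref{ker} and Theorem~\ref{dis} to conclude $\dacs{\serie A}{\serie{A'}}=0$ --- is handled correctly and is the only nontrivial point in an otherwise direct chaining.
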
 

In particular, a consequence is that $\mc H$  is a complete subspace of $\mc E$, since it is closed thanks to the statement $(1),(4) \implies \exists k: (2), (3)$ contained in the previous corollary.

\subsection{Isometry}

Let us define the a.c.s. equivalence on $\mc E$ as
\[
\serie A\sim_{acs} \serie B \iff \dacs{\serie A}{\serie B} = 0.
\]
A property of the zero distributed sequences is that
\[
\rho(\serie C) = 0\iff  \serie C\sim_\sigma 0.
\]
Consequently it is immediate to see that
\[ 
\serie A\sim_{acs} \serie B\iff \{A_n-B_n\}_n\sim_\sigma 0 \iff (\{A_n-B_n\}_n,0)\in\mc Z
\]
and thanks to the triangular inequality of $d_{acs}$, $\sim_{acs}$ is an equivalence relation in $\mc E$, and on all its subsets. We know that the set $\mc Z_M$ is a subalgebra of $\mc E$, and the previous lines show that
\[
\mc E / \sim_{acs} \equiv \mc E / \mc Z_M.
\]
As reported in Property \ref{ker}, $\mc Z_M$ is also the kernel of $S$, so we can define a new induced injective homomorphism of $\f C$-algebras
\[
T : \mc M := \mc H/\mc Z_M \hookrightarrow \mc M_D.
\]
The quotient preserves the distance, as shown in the following lemma.

\begin{lemma}
given $[A],[B]\in \mc M$, and given
\[\serie A,\serie {A'} \in [A],\qquad \serie B,\serie {B'}\in [B],\]
we have 
\[
\dacs{\serie A}{\serie B} = \dacs{\serie {A'}}{\serie {B'}}
\]
\end{lemma}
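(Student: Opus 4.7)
The plan is to prove this in the standard way one shows that a pseudometric descends to a well-defined metric on the quotient by its distance-zero equivalence relation. The hypotheses $[\serie A]=[\serie{A'}]$ and $[\serie B]=[\serie{B'}]$ translate, via the definition of $\sim_{acs}$, into $\dacs{\serie A}{\serie{A'}}=0$ and $\dacs{\serie B}{\serie{B'}}=0$; what I must show is that this forces $\dacs{\serie A}{\serie B}=\dacs{\serie{A'}}{\serie{B'}}$.

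The only ingredient is that $d_{acs}$ satisfies the triangle inequality on $\mc E$. This is inherited from the pointwise triangle inequality $p(A+B)\le p(A)+p(B)$ already recalled in Section 2: for each fixed $n$,
\[
p(A_n-C_n)\le p(A_n-B_n)+p(B_n-C_n),
\]
and taking $\limsup_{n\to\infty}$ on both sides (using the fact that $\limsup$ of a sum is dominated by the sum of $\limsup$s) yields $\dacs{\serie A}{\serie C}\le \dacs{\serie A}{\serie B}+\dacs{\serie B}{\serie C}$. With the triangle inequality for $d_{acs}$ in hand, two applications give
\[
\dacs{\serie A}{\serie B}\le \dacs{\serie A}{\serie{A'}}+\dacs{\serie{A'}}{\serie{B'}}+\dacs{\serie{B'}}{\serie B}=\dacs{\serie{A'}}{\serie{B'}},
\]
and the reverse inequality follows by swapping the primed and unprimed sequences.

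There is no real obstacle here; this is exactly the standard fact that a pseudometric quotiented by its distance-zero equivalence relation descends to a well-defined metric. The author's reason for isolating it as a lemma is presumably just to license the definition $d_{acs}([\serie A],[\serie B]):=\dacs{\serie A}{\serie B}$ on the quotient $\mc M$, which is what allows the induced map $T:\mc M\hookrightarrow \mc M_D$ to be compared isometrically to $d_m$ in the results that follow.
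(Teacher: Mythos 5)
Your proof is correct, and it is the cleaner, more standard route: you translate $\serie A,\serie{A'}\in[A]$ directly into $\dacs{\serie A}{\serie{A'}}=0$ (legitimate here, since the paper has already noted the quotient $\mc H/\mc Z_M$ agrees with the quotient by $\sim_{acs}$, i.e. by distance zero), and then the result is nothing more than two applications of the triangle inequality for $d_{acs}$, which you correctly derive from the pointwise subadditivity of $p$. The paper takes a more circuitous path: it writes $\serie A=\serie{A'}+\serie Z$ and $\serie B=\serie{B'}+\serie W$ with $\serie Z,\serie W\in\mc Z_M$, rewrites $\dacs{\serie A}{\serie B}$ as $\dacs{\serie{A'}-\serie{B'}}{\serie Z-\serie W}$, invokes the algebra structure of $\mc Z_M$ to conclude $\serie Z-\serie W$ is zero-distributed, and only then calls on Theorem~\ref{dis} to see $\dacs{\serie Z-\serie W}{\serie 0}=p_m(0)=0$, finishing with the same two-sided triangle-inequality squeeze. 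What the paper's version buys is explicit visibility of the algebraic role of $\mc Z_M$ (the decomposition into GLT part plus zero-distributed part), which foreshadows the isomorphism $\mc M\cong\mc M_D$; what your version buys is brevity and the recognition that this is exactly the generic fact that a pseudometric descends to the quotient by its distance-zero relation, with no need to invoke Theorem~\ref{dis} or the algebra structure of $\mc Z_M$ at all.
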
 
\begin{proof}
The following relations hold
\[\serie A = \serie {A'} +\serie {Z}\qquad  \serie B = \serie {B'} +\serie {W}\qquad \serie {Z},\serie {W}\in \mc Z_M\]
 so that
\[
\dacs{\serie A}{\serie B} = \dacs{\serie {A'} +\serie {Z}}{\serie {B'} +\serie {W}}\]
\[ = \rho(\serie {A'} -\serie {B'} +\serie {Z} -\serie {W})
\]
\[
= \dacs{\serie {A'}-\serie {B'}}{\serie {Z}-\serie {W}}.
\]
However both $\serie {Z}-\serie {W}$ and $-\serie {Z}+\serie {W}$ are zero distributed, since $\mc Z_M$ is an algebra, and by Theorem \ref{dis}, we deduce
\[
\dacs{\serie {Z}-\serie {W}}{\serie 0} = \rho(\serie {Z}-\serie {W}) = p_m(0) = 0,
\]
\[
\dacs{-\serie {Z}+\serie {W}}{\serie 0} = \rho(\serie {Z}-\serie {W}) = p_m(0) = 0.
\]
By applying the triangular inequality, we obtain
\[
\dacs{\serie {A'}-\serie {B'}}{\serie {Z}-\serie {W}} \]\[\le \dacs{\serie {A'}-\serie {B'}}{\serie 0} + \dacs{\serie 0}{\serie {Z}-\serie {W}}\]\[ = \dacs{\serie {A'}-\serie {B'}}{\serie 0} = \rho(\serie {A'}-\serie {B'}) = \dacs{\serie {A'}}{\serie {B'}}
\]
and 
\[
\dacs{\serie {A'}-\serie {B'}}{\serie {Z}-\serie {W}} \]\[\ge \dacs{\serie {A'}-\serie {B'}}{\serie 0} - \dacs{\serie 0}{-\serie {Z}+\serie {W}}\]\[ = \dacs{\serie {A'}-\serie {B'}}{\serie 0} = \rho(\serie {A'}-\serie {B'}) = \dacs{\serie {A'}}{\serie {B'}}.
\]
As a consequence
\[
\dacs{\serie A}{\serie B} \]\[= \dacs{\serie {A'}-\serie {B'}}{\serie {Z}-\serie {W}} \]\[=\dacs{\serie {A'}}{\serie {B'}}.
\]
\end{proof}

Te latter result implies that $\mc M$ has still a pseudometric, defined as
\[
\dacs{[A]}{[B]}:= \dacs{\serie A}{\serie B}
\]
for any $\serie A\in[A]$ and $\serie B\in [B]$. The induced $d_{acs}$ is actually a real distance, since 
\[
\serie A\in [A],\quad \serie B\in [B],\quad \dacs{\serie A}{\serie B}= 0
\]
\[
\implies \serie A\sim_{acs}\serie B\implies [A]\equiv [B],
\]
so $\mc M$ is endowed with a complete metric. Moreover, since we operated a quotient of algebras, it is immediate to see that we can define a symbol GLT for every element of $\mc M$ as
\[
[A]\GLT k \iff \forall\serie A\in[A], \serie A\GLT k.
\]
We can eventually state and prove the main result of this section.

\begin{teo}
The map $T:\mc M\to \mc M_D$ is an isomorphism of $\f C$-algebras and an isometry of metric spaces
\end{teo}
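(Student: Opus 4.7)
The plan is to verify the four separate assertions — that $T$ is a well-defined homomorphism of $\mathbb{C}$-algebras, that it is injective, that it is surjective, and that it is an isometry — drawing essentially every ingredient from results already established in the excerpt.

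First, I would recall that $S$ is a homomorphism of $\mathbb{C}$-algebras (Property \ref{ker}) with kernel exactly $\mc Z_M$. Since $\mc Z_M$ is itself a subalgebra of $\mc H$ (by Properties \ref{zero} and \ref{ker}), the quotient $\mc M = \mc H/\mc Z_M$ inherits a natural $\mathbb{C}$-algebra structure, and the induced map $T$ is a well-defined injective $\mathbb{C}$-algebra homomorphism by the standard first isomorphism theorem for algebras — this was already noted in the excerpt right before the definition of $T$.

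For surjectivity, I would invoke Theorem \ref{mis} directly: given any class $k\in\mc M_D$, there exists $\serie A\in\mc E$ with $\serie A\GLT k$. Then $\serie A\in\mc H$, and by definition of $T$ we have $T([\serie A]) = S(\serie A) = k$. Hence $T$ is surjective, which combined with injectivity and the homomorphism property makes it an isomorphism of $\mathbb{C}$-algebras.

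The isometry step is the one requiring genuine input from Section 2, and I would handle it as follows. Pick representatives $\serie A\in [A]$ and $\serie B\in [B]$, and let $f = T[A]$, $g = T[B]$, so $\serie A\GLT f$ and $\serie B\GLT g$. By the algebra property of $\mc G$,
\[
\{A_n - B_n\}_n \GLT f-g,
\]
so in particular $\{A_n - B_n\}_n \sim_\sigma f-g$. Theorem \ref{dis} then yields
\[
\rho\bigl(\{A_n - B_n\}_n\bigr) = p_m(f-g),
\]
which is exactly $\dacs{\serie A}{\serie B} = d_m(f,g)$. Using the previous lemma (distance is independent of representatives) and the definition of $d_{acs}$ on $\mc M$, this reads $\dacs{[A]}{[B]} = d_m(T[A], T[B])$, proving the isometry.

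The only mildly delicate point is making sure the isometry argument is coherent: once one accepts the algebra structure of $\mc G$ and the identity $\rho = p_m$ for spectrally-distributed sequences, everything collapses to a one-line computation, so there is no real obstacle — all the analytical work has already been done in Section 2 (Theorem \ref{dis}) and in the algebraic Properties \ref{zero}--\ref{ker}, together with the surjectivity provided by Theorem \ref{mis}.
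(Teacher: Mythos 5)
Your proof is correct and follows essentially the same route as the paper: injectivity from the first isomorphism theorem as already noted before the statement, surjectivity from Theorem \ref{mis}, and the isometry by passing to representatives, using the algebra property of $\mc G$ to obtain $\{A_n-B_n\}_n\GLT k-h$, and then applying Theorem \ref{dis} to identify $\rho$ with $p_m$. No substantive difference from the paper's argument.
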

\begin{proof}
$T$ is already an injective homomorphism of $\f C$-algebras, but in view of Theorem \ref{mis}, it is also surjective, so it is an isomorphism. If $[A],[B]\in \mc M$ with $[A]\GLT k$ and $[B]\GLT h$, and $\serie A\in [A]$, $\serie B\in [B]$, then
\[
\dacs{[A]}{[B]} = \dacs{\serie A}{\serie B} =\rho(\serie A-\serie B) 
\]
but $\serie A\GLT k$, $\serie B\GLT h$, and $\mc G$ is an algebra, so $\serie A-\serie B\GLT k-h$. Thanks to Theorem \ref{dis} we know that
\[
\rho(\serie A-\serie B) = p_m(k-h) = d_m(k,h) = d_m(T[A], T[B])
\]
hence
\[
\dacs{[A]}{[B]} = d_m(T[A], T[B])
\]
from which we conclude that $T$ is an isometry of metric spaces.
\end{proof}

\subsection{Maximality}

In this last section, we investigate the maximality of the GLT algebra in $\mc C_D$. Formally, let $\mc S_D$ be the set of the groups in $\mc C_D$. A group $G$ is characterized by the properties
\begin{itemize}
\item $a,b\in G\implies a+b \in G$,
\item $a\in G \implies -a\in G$,
\end{itemize}
so this means that $\mc G\in \mc S_D$.

 We can notice that Corollary \ref{Cau} can be generalized to any group $G\in\mc S_D$ with the same proof. 
 \begin{lemma}
Let $G\in \mc S_D$, and $(\{B_{n,m}\}_{n,m},f_m)\cu G$. Then
 \[
 \{B_{n,m}\}_{n,m} \text{ converges} \iff f_m \text { converges.}
 \] 
 \end{lemma}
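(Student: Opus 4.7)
The plan is to follow the proof of Corollary \ref{Cau} essentially verbatim, replacing the $\mathbb{C}$-algebra structure of $\mc G$ with the (weaker) group structure of $G$. Since $\mc E$ is complete with respect to the pseudometric $d_{acs}$ by Theorem \ref{comp}, and $\mc M_D$ is complete with respect to $d_m$, it suffices to show that $\{B_{n,m}\}_{n,m}$ is a Cauchy sequence in $\mc E$ if and only if $\{f_m\}_m$ is a Cauchy sequence in $\mc M_D$.

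The key observation is that the group axioms give us exactly the closure property we need. Indeed, for any indices $i,j$, since $(\{B_{n,i}\}_n, f_i)$ and $(\{B_{n,j}\}_n, f_j)$ both lie in $G$, the group closure under addition and negation yields
\[
(\{B_{n,i} - B_{n,j}\}_n,\, f_i - f_j) \in G \subseteq \mc C_D,
\]
which, by the definition of $\mc C_D$, means that $\{B_{n,i} - B_{n,j}\}_n \sim_\sigma f_i - f_j$. Applying Theorem \ref{dis} to this difference gives
\[
\dacs{\{B_{n,i}\}_n}{\{B_{n,j}\}_n} = \rho(\{B_{n,i} - B_{n,j}\}_n) = p_m(f_i - f_j) = d_m(f_i, f_j).
\]

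From the identity $\dacs{\{B_{n,i}\}_n}{\{B_{n,j}\}_n} = d_m(f_i, f_j)$, the Cauchy condition on one side is equivalent to the Cauchy condition on the other; completeness of both ambient spaces then delivers the equivalence of convergence. There is no real obstacle here: the only property of $\mc G$ actually used in Corollary \ref{Cau} was that differences of pairs in the structure remain in $\mc C_D$ with the expected symbol, and this is guaranteed by the two group axioms alone, without needing closure under products or scalar multiplication. Consequently the same argument transfers without modification.
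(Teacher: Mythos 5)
Your proof is correct and follows precisely the approach the paper intends: the paper remarks that the lemma holds ``with the same proof'' as Corollary \ref{Cau}, and you have carried out exactly that adaptation, correctly identifying that the two group axioms (closure under addition and negation) are all that is needed to obtain $(\{B_{n,i}-B_{n,j}\}_n,\,f_i-f_j)\in G\subseteq \mc C_D$, after which Theorem \ref{dis} and the completeness of $\mc E$ and $\mc M_D$ finish the argument.
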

 
 We have already seen that $\mc E$ and $\mc M_D$ are complete spaces, along with their product. We proved that even the set $\mc G$ is a complete space, and it is easy to reprove it in a different way: $\mc G$ is closed in $\mc E \times \mc M_D$, and a closed set in a complete space is complete. The same reasoning can be extended to groups.
 \begin{lemma}
Let $G\in \mc S_D$. Then $M$ is complete if and only if it is closed in $\mc E \times \mc M_D$.
 \end{lemma}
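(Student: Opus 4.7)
The plan is to apply the standard topological equivalence: a subset of a complete (pseudo)metric space is complete if and only if it is closed. Here the ambient space $\mc E \times \mc M_D$ is complete under the product pseudometric, since $\mc E$ is complete by Theorem \ref{comp} and $\mc M_D$ is complete by hypothesis. Once that is noted, everything reduces to chasing Cauchy sequences between the two spaces.

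For the direction $(\Leftarrow)$, I would assume $G$ is closed in $\mc E \times \mc M_D$ and take any Cauchy sequence $\{(\{B_{n,m}\}_n, k_m)\}_m \subseteq G$. Completeness of the ambient product gives a limit $(\serie A, k) \in \mc E \times \mc M_D$, and closedness of $G$ forces $(\serie A, k) \in G$. This direction is entirely routine.

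For the direction $(\Rightarrow)$, I would assume $G$ is complete and let $\{(\{B_{n,m}\}_n, k_m)\}_m \subseteq G$ converge to some $(\serie A, k) \in \mc E \times \mc M_D$. The sequence is Cauchy, so by completeness of $G$ it also converges to some $(\serie{A'}, k') \in G$. Uniqueness of limits modulo the pseudometric equivalence then gives $k = k'$ in $\mc M_D$ and $\dacs{\serie A}{\serie{A'}} = 0$, so $\serie A - \serie{A'}$ is zero-distributed. The decomposition $(\serie A, k) = (\serie{A'}, k) + (\serie A - \serie{A'}, 0)$, combined with the group structure of $G \in \mc S_D$ and the fact that closed sets in a pseudometric space are saturated under the distance-zero equivalence, places $(\serie A, k)$ in $G$.

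The main obstacle—indeed the only point beyond straightforward bookkeeping—is the final identification in the $(\Rightarrow)$ direction: the ambient pseudometric does not guarantee uniqueness of limits, so the two candidate limits differ by a zero-distributed perturbation that must be absorbed using the group structure of $G$. The remainder of the argument is the familiar ``complete iff closed'' statement for subspaces of a complete (pseudo)metric space.
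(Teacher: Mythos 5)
Your $(\Leftarrow)$ direction is correct and matches the paper's implicit reasoning: a closed subset of a complete pseudometric space is complete, and this holds even though $\mc E$ carries only a pseudometric (any limit of a Cauchy sequence lies in the closure, hence in a closed set).

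The $(\Rightarrow)$ direction, however, has a genuine gap, and you have half-noticed it yourself. After obtaining the two limits $(\serie A, k)$ and $(\serie{A'}, k')$ with $k = k'$ and $\serie A - \serie{A'}$ zero-distributed, you write the decomposition $(\serie A, k) = (\serie{A'}, k) + (\serie A - \serie{A'}, 0)$ and claim this lies in $G$ ``combined with the group structure of $G$ and the fact that closed sets in a pseudometric space are saturated under the distance-zero equivalence.'' That last clause is circular: closedness of $G$ is precisely what you are trying to establish, so you cannot invoke its saturation property. And the group structure of $G$ only lets you add and subtract elements \emph{already in} $G$; to conclude $(\serie A, k) \in G$ you must first know that $(\serie A - \serie{A'}, 0) \in G$, i.e.\ that $\mc Z \subseteq G$. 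That hypothesis is not part of the lemma. In fact, without it the lemma is false: take the trivial group $G = \{(\serie 0, 0)\}$, which is vacuously complete, but whose closure in $\mc E \times \mc M_D$ is all of $\mc Z$ (every zero-distributed sequence is at $d_{acs}$-distance zero from $\serie 0$). The theorem that follows this lemma in the paper explicitly assumes $\mc Z \subseteq G$, and that assumption is what makes your decomposition argument go through. You should either add $\mc Z \subseteq G$ as a hypothesis, or restrict the claim to the $(\Leftarrow)$ direction only.
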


Even Theorem \ref{mis} is generalizable, but we need additional assumptions.
  \begin{teo}
 Let $G\in \mc S_D$ be a group containing $\mc Z$. Then every measurable function $k$ is a spectral symbol for $G$ if and only if 
 \begin{itemize}
 \item The set of spectral symbol for $G$ is dense in $\mc M_D$.
 \item $G$ is closed in $\mc E \times \mc M_D$.
 \end{itemize}
 and in this case, $G$ is a maximal element in $\mc S_D$ if we impose the inclusion partial order on it.
  \end{teo}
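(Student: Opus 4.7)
The theorem bundles three claims: the forward and backward directions of the biconditional and the maximality assertion. The plan is to reduce each of them to the introductory a.c.s.~closure of spectral symbols (which lifts spectral symbols through a.c.s.~limits when the symbols converge in measure) and the hypothesis $\mc Z\subseteq G$, using the group structure to cancel two elements of $G$ that share a common second component.

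For the forward implication of the biconditional, density is immediate because the spectral symbols of $G$ already exhaust $\mc M_D$. For closedness, I would take $(\{B_{n,m}\}_n,k_m)\in G$ converging to some $(\serie A,k)\in \mc E\times \mc M_D$, invoke the hypothesis to pick a witness $(\serie{A'},k)\in G$, and pass to differences. The pair $(\{B_{n,m}-A'_n\}_n,\,k_m-k)$ lies in $G\subseteq\mc C_D$; its first component a.c.s.-converges to $\{A_n-A'_n\}_n$ by translation invariance of $d_{acs}$, and its second component tends to $0$ in measure. The introductory closure statement then forces $\{A_n-A'_n\}_n\sim_\sigma 0$, so $(\{A_n-A'_n\}_n,0)\in \mc Z\subseteq G$, and re-adding $(\serie{A'},k)$ lands $(\serie A,k)$ in $G$.

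For the reverse implication, I would fix $k\in \mc M_D$ and use density to produce $(\{B_{n,m}\}_n,k_m)\in G$ with $k_m\to k$ in measure. The generalized Cauchy lemma stated just before the theorem, combined with completeness of $(\mc E,d_{acs})$, yields an a.c.s.~limit $\serie A$ of $\{B_{n,m}\}_{n,m}$; the pair then converges to $(\serie A,k)$ in the product, and closedness of $G$ places $(\serie A,k)\in G$, so $k$ is a spectral symbol for $G$.

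Maximality is then a short coset-style contradiction. If some $G'\in \mc S_D$ properly contains $G$, pick $(\serie A,k)\in G'\setminus G$ and, by the biconditional, a witness $(\serie{A'},k)\in G\subseteq G'$. Since $G'$ is a group inside $\mc C_D$, the difference $(\serie A-\serie{A'},0)$ lies in $G'\subseteq \mc C_D$, meaning $\serie A-\serie{A'}$ is zero-distributed and hence $(\serie A-\serie{A'},0)\in \mc Z\subseteq G$. Adding $(\serie{A'},k)$ back recovers $(\serie A,k)\in G$, contradicting the choice of $(\serie A,k)$. The main subtlety I anticipate is the forward direction, where one must remember that the introductory a.c.s.~closure statement is a property of $\mc C_D$ itself and does not presuppose closedness of $G$, which is exactly what allows the assumption $\mc Z\subseteq G$ to promote the zero-distributed difference into an honest element of $G$.
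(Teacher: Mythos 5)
Your proposal is correct and follows the same overall strategy as the paper's proof: all three parts (forward implication, backward implication, maximality) are handled by the same witness-and-cancel device that exploits the group structure of $G$ together with the hypothesis $\mc Z\subseteq G$. The density claim, the backward direction via the generalized Cauchy lemma plus completeness and closedness, and the maximality argument are essentially verbatim the paper's.

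The one point where you take a slightly different technical path is in establishing closedness in the forward direction. You form the difference pair $(\{B_{n,m}-A'_n\}_n,\,k_m-k)\in G\subseteq \mc C_D$ and feed it directly into the introductory a.c.s.-closure property of spectral symbols (symbol convergence in measure plus a.c.s.\ convergence of the sequences lifts the symbol to the limit), concluding at once that $\{A_n-A'_n\}_n\sim_\sigma 0$, hence $(\{A_n-A'_n\}_n,0)\in\mc Z\subseteq G$. The paper instead first deduces $\{B_{n,m}\}_n\acs\serie{A'}$ from Theorem~\ref{dis} (since $\rho(\{A'_n-B_{n,m}\}_n)=p_m(k-k_m)\to 0$), then combines it with $\{B_{n,m}\}_n\acs\serie A$ via the triangle inequality for $d_{acs}$ to obtain $\dacs{\serie A}{\serie{A'}}=0$, and only then passes to zero-distribution. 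Both routes are sound and of comparable length; yours bypasses the triangle-inequality step and stays closer to the $\sim_\sigma$ formalism, while the paper's keeps the argument inside the $(\rho,p_m)$ metric machinery developed in Section 2. No gap to report.
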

  \begin{proof}
If the two conditions are true, then the proof is analogous to the one of Theorem \ref{mis}. 
 
For the converse, if all measurable functions are spectral symbols for $G$, then the first condition is surely verified. Let
	\[(\B,k_m)\cu G,\qquad (\B,k_m)\to (\serie A,k).\]
We know that there exists $(\serie C,k)\in G$, so 
\[ (\{C_n-B_{n,m}\},k-k_m) \in M, \qquad k-k_m\to 0\]
and 
\[
\B\acs \serie C, \qquad \B\acs \serie A
\]
\[ \dacs{\serie A}{\serie C}\le \dacs{\serie A}{\{B_{n,m}\}_n} +  \dacs{\serie C}{\{B_{n,m}\}_n} \to 0\]
\[\implies \dacs{\serie A}{\serie C} = 0 \implies \serie A-\serie C\in \mc Z_M.\]
We know that $\mc Z\cu G$, so we conclude
\[
(\serie A,k) = (\serie C,k) + (\serie A-\serie C,0)\in G.
\]

To show that $G$ is a maximal element in $\mc S_D$, let $N$ be a group in $\mc S_D$ containing $G$, and let us consider any couple $(\serie{A},k)\in N$. We know that there exists $(\serie C,k)\in G\cu N$, but $N$ is a group, so $(\{A_n-C_n\}_n,0)\in N$. This is also a zero distributed sequence, so it belongs to $G$, and, as before, 
\[(\serie A,k) = (\serie C,k) + (\serie A-\serie C,0)\in G.\]
This concludes that $N=G$, so $G$ is maximal in $\mc S_D$.
  \end{proof}

If $D = [0,1]\times [-\pi,\pi]$, then we can apply the previous theorem, for obtaining the following result.

\begin{corollario}
$\mc G$ is a maximal group in $\mc{S}_D$.
\end{corollario}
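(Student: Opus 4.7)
The plan is to apply the preceding general theorem directly to $G=\mc G$, so the task reduces to verifying its hypotheses. First, I would check that $\mc G$ belongs to $\mc S_D$: by Property (1) in Section~3, $\mc G$ is a $\f C$-algebra, hence in particular closed under sums and additive inverses, so it is a group in $\mc C_D$. Next, $\mc G$ contains $\mc Z$ by Property~(\ref{zero}), which states exactly that $\mc Z$ is a subalgebra of $\mc G$. These two points put us in position to invoke the previous theorem.

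Second, I would observe that since $D=[0,1]\times[-\pi,\pi]$, Theorem~\ref{mis} is available and tells us that every $k\in \mc M_D$ is a GLT symbol, hence a spectral symbol for $\mc G$. This means $\mc G$ satisfies the left-hand side of the iff in the preceding theorem (every measurable function is a spectral symbol for $G$). The theorem then concludes automatically that $\mc G$ is maximal in $\mc S_D$ under the inclusion order.

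There is essentially no obstacle here: all the heavy lifting was carried out in Theorem~\ref{mis} (which produced a GLT preimage for every measurable function by Cauchy completeness plus closedness of $\mc G$) and in the generalized maximality theorem (which exploits $\mc Z\cu G$ to absorb zero-distributed differences). The corollary is a one-line application, and the proof would consist of listing the three checks above and writing "hence, by the preceding theorem, $\mc G$ is maximal in $\mc S_D$."
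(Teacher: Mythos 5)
Your proof is correct and matches the paper's own (one-line) argument: the corollary is obtained by verifying that $\mc G$ is a group containing $\mc Z$ and that Theorem~\ref{mis} supplies "every measurable function is a GLT symbol," then invoking the preceding maximality theorem. Nothing is missing.
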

%
%

The last question left to see is if $\mc G$ is also a maximum element in $\mc S_D$, meaning that it contains all the other groups. The answer, in this case, is negative, since there exists a lot of trivial transformations of the variables that do not change the spectral symbols properties. For example
\[
GLT^{inv} = \Set{(\serie{A},k(1-x,\theta)) | \serie{A}\GLT k(x,\theta)}
\]
is a closed algebra in $\mc C_D$ since \[\serie{A}\GLT k(x,\theta)\implies \serie{A}\sim_\sigma k(x,\theta) \implies \serie{A}\sim_\sigma k(1-x,\theta).\]
In general, any symmetry or cyclic translation of the variables produces spectral symbols for the same sequences, and they respect all the axiom of closed algebra since the operations of sum/product/convergence commute with the transformations. Eventually, GLT$^{inv}$ and GLT are incompatible, since there exists $k$ that is not symmetric on the variable $x$, so 
\[
\serie{A}\GLT k(x,\theta), 	\quad   k(x,\theta)\ne k(1-x,\theta)\]
\[ \implies \serie{A}\not\sim_{GLT} k(1-x,\theta),\quad \serie{A}\not\sim_{GLT^{inv}} k(x,\theta)
\]
and this means that they are not contained in each other.

\bibliography{ArtGLT}

\bibliographystyle{plain}

\end{document}